\let\mathcal\mathscr
\theoremstyle{plain}
\newtheorem{prop}[equation]{\propname}
\newtheorem{theo}[equation]{\theoname}
\newtheorem{lemm}[equation]{\lemmname}
\theoremstyle{definition}
\theoremstyle{remark}
\newtheorem{rema}[equation]{\remaname}
\let\cal\mathcal
\let\goth\mathfrak
\def\Qbar{\overline{\bf Q}}
\def\Q{{\bf Q}} \def\Z{{\bf Z}}
\def\C{{\bf C}}
\def\B{{\bf B}}
\def\R{{\bf R}}
\def\O{{\cal O}}
\def\dual{{\boldsymbol *}}
\begin{document}
\title{La formule limite de Kronecker}
\author{Pierre Colmez}
\address{C.N.R.S., IMJ-PRG, Sorbonne Universit\'e, 4 place Jussieu,
75005 Paris, France}
\email{pierre.colmez@imj-prg.fr}
\begin{abstract}
Nous faisons quelques remarques sur les liens entre la formule limite de Kronecker
pour les corps totalement r\'eels, les p\'eriodes (normalis\'ees) des vari\'et\'es ab\'eliennes CM,
et les unit\'es des extensions ab\'eliennes des corps CM.
\end{abstract}
\begin{altabstract}
We make some remarks about the relation between the Kronecker limit formula for totally real fields, 
(normalized) periods of CM abelian varieties, and units in abelian extensions of CM fields.
\end{altabstract}

\maketitle

Cette petite note est une s\'erie de commentaires sur~\cite{fdp},
en particulier sur
 le quatri\`eme point de~\cite[\S\,0.7]{fdp}, 
qui explicite ce que la conjecture~\cite[conj.\,04]{fdp}
(ou \cite[conj.\,II.2.11]{fdp})
implique pour la moyenne, sur tous les types CM possibles,
des hauteurs de Faltings des vari\'et\'es ab\'eliennes CM
par l'anneau des entiers $\O_K$ d'un corps CM $K$ donn\'e: si $\chi$ est le caract\`ere
quadratique associ\'e \`a $K/F$ o\`u $F$ est le sous-corps r\'eel maximal de $K$, 
on a\footnote{Les parenth\`eses de
la formule de~\cite{fdp} sont mal plac\'ees.}
\begin{equation}\label{truc0}
\frac{1}{2^{[F:{\mathbb Q}]}}\sum_{\Phi\in\Phi(K)}
h_{\rm Fal}(X_\Phi)=-\frac{1}{2}\Big(\frac{L'(\chi,0)}{L(\chi,0)}+\frac{1}{2}\log{\bf f}_\chi\Big)
-\frac{[F:{\mathbb Q}]}{2}\log 2\pi
\end{equation}
Cette formule est maintenant un th\'eor\`eme (\cite{Y} pour $[K:\Q]=4$ et \cite{AGHM,YZ}
pour le cas g\'en\'eral). Dans~\cite[\S\,0.7]{fdp},
cette formule est suivie du commentaire suivant:
\begin{quote}
[...] le second membre s'exprime \`a l'aide de la formule limite de Kronecker pour $F$. La conjecture 0.4
laisse donc penser que la fonction apparaissant dans la formule limite de Kronecker pour un corps
totalement r\'eel a des propri\'et\'es arithm\'etiques, au moins aux points CM, tout-\`a-fait
similaires \`a celles de la classique fonction $\log|\Delta|$ [...]
\end{quote}
Nous explicitons ci-dessous (formule~(\ref{truc})) ce qu'il en est exactement\footnote{Ce que j'avais
en t\^ete \`a l'\'epoque \'etait beaucoup trop optimiste.}. Nous en profitons
aussi pour donner une d\'efinition g\'eom\'etrique (th.\,\ref{per1} et prop.\,\ref{CM5})
de p\'eriodes ``absolues'' des vari\'et\'es
ab\'eliennes CM; notons que Yoshida~\cite{yoshida1} a formul\'e une conjecture
donnant une construction analytique de telles p\'eriodes.
Enfin, dans l'appendice, nous montrons (prop.\,\ref{per3}) que le
(ii) de la conjecture~\cite[conj.\,II.2.11]{fdp} (concernant les hauteurs de Faltings)
implique ``la moiti\'e'' de la conjecture compl\`ete (la conjecture
compl\`ete est connue dans le cas ab\'elien gr\^ace \`a~\cite[th.\,III.3.9]{fdp} compl\'et\'e
par~\cite{O}).

\section{Fonctions z\^eta de Dedekind en $s=0$}
On note $\Qbar$ la cl\^oture alg\'ebrique de $\Q$ dans $\C$.
Si $L\subset\Qbar$ est un corps de nombres, on note $\O_L$ l'anneau de ses entiers, 
$U_L$ le groupe des unit\'es de $\O_L$, ${\rm Cl}_L$ le groupe
des classes d'id\'eaux fractionnaires de $L$, $h_L$ son cardinal, $w_L$ le nombre de racines
de l'unit\'es dans $L$. On note $H_L$ l'ensemble ${\rm Hom}(L,\Qbar)$
des plongements de $L$ dans $\Qbar$; comme on a suppos\'e $L\subset\Qbar$,
cela fait que $H_L$ contient des \'el\'ements privil\'egi\'es ${\rm id}$ et la
conjugaison complexe $c$ (qui peut co\"{\i}ncider avec ${\rm id}$).

Soient $r_1$ le nombre de places r\'eelles de $L$ et $r_2$ le nombre
de ses places complexes, et donc $[L:\Q]=r_1+2r_2$.
Si $v$ est une place archim\'edienne de $L$, choisissons $\sigma_v\in H_L$ induisant $v$,
et posons $e_v=1$ si $v$ est r\'eelle et $e_v=2$ si $v$ est complexe.  L'injection
de $L$ dans $\Qbar$ induit une place privil\'egi\'ee $v_0$.
Si $u_1,\dots,u_{r_1+r_2-1}$ est une famille libre d'\'el\'ements de $U_L$, le sous-groupe
$\langle u_1,\dots,u_{r_1+r_2-1}\rangle$ qu'ils engendrent est d'indice fini dans $U_L$.
On d\'efinit le r\'egulateur $R_L$ de $U_L$ par la formule
$$
R_L:=\frac{1}{[U_L:\langle u_1,\dots,u_{r_1+r_2-1}\rangle]}
\big|\det(\log|\sigma_v(u_i)|^{e_v})_{v\neq v_0,\,1\leq i\leq r_1+r_2-1}\big|
$$
(Comme on a divis\'e par l'indice dans tout le groupe des unit\'es et pas dans sa partie libre,
notre $R_L$ correspond au $\frac{R_L}{w_L}$ habituel.)

On note $\zeta_L$ la fonction z\^eta de Dedekind de $L$; c'est une fonction analytique sur $\C$, 
holomorphe en dehors d'un p\^ole simple en $s=1$; son comportement en $s=0$ est donn\'e
par la formule analytique du nombre de classes
$$\zeta_L(s)\sim -{h_LR_L}s^{r_1+r_2-1}$$

Soient $K$ un corps CM et $F$ son sous-corps totalement r\'eel maximal.  
On note $d$ le degr\'e de $F$ sur $\Q$.
Alors $U_F$ est d'indice fini dans $U_K$,
et on a 
$$[U_K:U_F]=2^{d-1}\frac{R_F}{R_K}$$

Au voisinage de $0$, on a 
$$\zeta_F(s)=-h_F\big({R_F}s^{d-1}+\gamma_F s^d\big)+O(s^{d+1}),
\quad{\text{avec $\gamma_F\in\R$.}}$$
\begin{rema}\label{Kr1}
On a $\gamma_\Q=\log\sqrt{ 2\pi}$, mais la nature arithm\'etique de
$\gamma_F$ n'est pas claire si $F\neq\Q$.
\end{rema}
Soit $\chi$ le caract\`ere
de Dirichlet de $F$ correspondant \`a l'extension quadratique $K/F$.
La formule (\ref{truc0}) est \'equivalente (via le (ii) de \cite[th.\,0.3]{fdp})
\`a l'identit\'e\footnote{${\rm ht}$ est la forme lin\'eaire fabriqu\'ee
\`a partir des logarithmes de p\'eriodes de vari\'et\'es ab\'eliennes CM, qui est d\'efinie
dans~\cite[th.\,0.3]{fdp} (cf.~aussi le dernier point de la prop.\,\ref{CM5} et la
note~\ref{per20}.)}
$$L(\chi,s)=L(\chi,0)(1+{\rm ht}(\chi)s)+O(s^2)$$
Comme $\zeta_K(s)=\zeta_F(s)L(\chi,s)$, on en tire
$$\zeta_K(s)=-h_FL(\chi,0)\big({R_F}s^{d-1}+(\gamma_F
+{R_F}{\rm ht}(\chi))s^d\big)+O(s^{d+1})$$
En comparant avec la formule analytique du nombre de classes pour $K$, cela
donne 
\begin{align}
L(\chi,0)&=\frac{h_KR_K}{h_FR_F}=\frac{h_K}{h_F}\ \frac{2^{d-1}}{[U_K:U_F]}\notag\\
\label{truc2}
\zeta_K(s)&=-{h_KR_K}s^{d-1}-\Big(\frac{2^{d-1}h_K}{[U_K:U_F]}\gamma_F
+h_K{R_K}{\rm ht}(\chi)\Big)s^d+O(s^{d+1})
\end{align}

\section{Fonctions z\^eta d'Epstein en $s=0$}
Si $\Lambda$ est un $\O_F$-r\'eseau de $\C\otimes_\Q F$, on pose
$$E(\Lambda,s)=\sum\nolimits'_{\omega\in\Lambda/U_F}\frac{1}{|N(\omega)|^{2s}}$$
 o\`u $N:\C\otimes F\to\C$
est induit par ${\rm N}_{F/\Q}:F\to\Q$.  Alors, $E(\Lambda,s)$ admet
un prolongement analytique \`a tout le plan complexe, holomorphe en dehors d'un p\^ole simple en $s=1$,
et le comportement au voisinage de $0$ est donn\'e par la formule limite de Kronecker:
$$E(\Lambda,s)=-{R_F}s^{d-1}-(\Psi_F(\Lambda)+2^{d-1}\gamma_F)s^d+O(s^{d+1})$$
o\`u $\Psi_F$ est une fonction sur les $\O_F$-r\'eseaux v\'erifiant
$$\Psi_F(\alpha\Lambda)=\Psi_F(\Lambda)+{R_F}\log|{\rm N}(\alpha)|^2,
\quad{\text{si $\alpha\in (\C\otimes_\Q F)^\dual$}}$$ 
et la constante
$2^{d-1}\gamma_F$ est l\`a pour rendre les formules suivantes plus esth\'etiques.
\begin{rema}\label{Kr2}
(i) On peut trouver $\omega_1,\omega_2$ tels que $\Lambda={\goth a}\omega_1\oplus\O_F\omega_2$,
o\`u ${\goth a}$ est un id\'eal de $\O_F$ dont l'image dans ${\rm Cl}_K$ ne d\'epend que de $\Lambda$.
Si $z=\frac{\omega_2}{\omega_1}$, alors 
$$\Psi_F(\Lambda)=\Psi_F({\goth a}\oplus\O_F z)+ R_F\log|{\rm N}(\omega_1)|^2$$
  Les $z\mapsto \Psi_F({\goth a}\oplus\O_F z)$, 
pour $\dot{\goth a}\in{\rm Cl}_F$ sont les branches de la forme automorphe 
analytique r\'eelle dont la transform\'ee de Mellin est $\zeta_F(s)\zeta_F(s+1)$.

(ii) Si $F=\Q$, on peut \'ecrire $\Lambda=\omega_1(\Z+\Z z)$ avec ${\rm Im}(z)>0$,
et on a $$\Psi_\Q(\Z+\Z z)=\log(\sqrt{2\pi}\,|\eta(z)|^2)$$
o\`u $\eta(z)=q^{1/24}\prod_{n\geq 1}(1-q^n)$ est la forme modulaire de poids $\frac{1}{2}$ habituelle.
\end{rema}

Si $\dot{\goth a}\in{\rm Cl}_K$, fixons un id\'eal ${\goth a}$ de $\O_K$ dont
l'image dans ${\rm Cl}_K$ est $\dot{\goth a}$.
Alors 
$$\zeta_{\dot{\goth a}}(s)=\sum_{{\goth b}\sim{\goth a}}\frac{1}{N{\goth b}^s}=
\frac{1}{[U_K:U_F]}\frac{1}{N{\goth a}^s}\sum_{\alpha\in{\goth a}^{-1}/U_F}
\frac{1}{|N(\alpha)|^{2s}}
=\frac{1}{[U_K:U_F]}\frac{1}{N{\goth a}^s} E({\goth a}^{-1},s)$$
Au voisinage de $0$, on a donc
\begin{equation}\label{truc3}
\zeta_{\dot{\goth a}}(s)=-{R_K}s^{d-1}-
\tfrac{1}{[U_K:U_F]}(\Psi_F(\dot{\goth a})+2^{d-1} \gamma_F)s^d+O(s^{d+1})
\end{equation}
o\`u $\Psi_F(\dot{\goth a})
:=\Psi_F({\goth a}^{-1})+{R_F}\log{\rm N}{\goth a}$.
En comparant les formules (\ref{truc2}) et (\ref{truc3}) pour 
$\zeta_K(s)=\sum_{\dot{\goth a}\in{\rm Cl}_K}\zeta_{\dot{\goth a}}(s)$,
on en tire l'identit\'e
\begin{equation}\label{truc7}
\frac{1}{[U_K:U_F]}
\sum_{\dot{\goth a}\in{\rm Cl}_K}\Psi_F(\dot{\goth a})= h_K{R_K}{\rm ht}(\chi)
\end{equation}
Maintenant, si $\eta:{\rm Cl}_K\to \C^\dual$ est un caract\`ere,
le comportement de $L(\eta,s)$ au voisinage de $0$ est donn\'e par la conjecture de Stark:
si $H$ est le corps de classes de Hilbert de $K$, et 
si $U_H^\eta$ est le sous-espace de $U_H\otimes\Q(\eta)$ sur lequel ${\rm Gal}(H/K)={\rm Cl}_K$
agit par $\eta$, alors
il existe $A(\eta)\in\Q(\eta)$ tel que
$$L(\eta,s)=A(\eta)R(U_H^\eta) s^d+O(s^{d+1})$$
o\`u $R(U_H^\eta)$ est le r\'egulateur de $U_H^\eta$ (un d\'eterminant $d\times d$ de combinaisons
lin\'eaires de logarithmes d'unit\'es de $\O_H$).
Comme $L(\eta,s)=\sum_{\dot{\goth a}}\eta(\dot{\goth a})\zeta_{\dot{\goth a}}(s)$, 
une application de la formule d'inversion
de Fourier sur ${\rm Cl}_K$ fournit l'identit\'e (conjecturale)
\begin{equation}\label{truc}
\Psi_F(\dot{\goth a})=
[U_K:U_F]{R_K}{\rm ht}(\chi)+\frac{1}{h_K}\sum_{\eta\neq 1}\eta(\dot{\goth a})^{-1}
A(\eta)R(U_H^\eta)
\end{equation}

\begin{rema}\label{CM1}
(i) Si $F=\Q$, l'identit\'e~(\ref{truc})
 est un th\'eor\`eme gr\^ace au lien entre $\eta$, les p\'eriodes
des courbes elliptiques CM et les unit\'es elliptiques.

(ii) Si $F\neq\Q$, l'identit\'e~(\ref{truc})
montre que $\Psi_F$ prend des valeurs int\'eressantes aux points
sp\'eciaux, mais que la situation est nettement plus compliqu\'ee que pour $\Q$
car ces valeurs font intervenir des d\'eterminants $d\times d$ de logarithmes
de nombres significatifs, ce qui ne donne pas un acc\`es direct \`a ces nombres.

(iii) En dehors du cas $F=\Q$, on peut prouver l'identit\'e (\ref{truc}) si
$|{\rm Cl}_K|=1,2$.  Si $|{\rm Cl}_K|=1$, cela se r\'eduit \`a
la formule~(\ref{truc7}),
et si
$|{\rm Cl}_K|=2$, on obtient
$$\Psi_F(\dot{\goth a})=[U_K:U_F]{R_K}{\rm ht}(\chi)\pm \frac{1}{2}\frac{h_HR_H}{h_KR_K}
=[U_K:U_F]{R_K}{\rm ht}(\chi)\pm \frac{1}{2}{h^-_HR^-_H}$$
o\`u $\pm=+$ si $\dot{\goth a}$ est la classe des id\'eaux principaux
et $\pm=-$ sinon.

(iv) Le membre de droite de l'identit\'e (\ref{truc}) n'a pas l'air homog\`ene
car le terme $R_K{\rm ht}(\chi)$ est un produit d'un d\'eterminant $(d-1)\times(d-1)$
de logarithmes de conjugu\'es d'unit\'es par le logarithme d'une p\'eriode, alors que tous les autres
termes sont des d\'eterminants $d\times d$ de logarithmes de conjugu\'es d'unit\'es.
Nous expliquons ci-dessous (prop.\,\ref{Kr6}) comment \'ecrire naturellement
$R_K{\rm ht}(\chi)$ comme un d\'eterminant $d\times d$.
\end{rema}

\def\CMm{{\cal C}\hskip-.9mm{\cal M}_{\hskip-.4mm {\text{-\hskip-.2mm-}}}}
\def\CM{{\cal C}\hskip-.9mm{\cal M}}

\section{Corps CM}
Notons $\Q^{\rm CM}\subset\Qbar$ le compos\'e de tous les corps CM
et, si $K$ est un corps CM, notons $G_K^{\rm CM}$ le groupe ${\rm Gal}(\Q^{\rm CM}/K)$.

Soit $\CM$ l'espace des $a:G_\Q\to\Z$, localement constantes,
 se factorisant par $G_\Q^{\rm CM}$,
telles que $g\mapsto a(g)+a(cg)$ soit constante,
o\`u $c$ est la conjugaison complexe
(qui est dans le centre de $G_\Q^{\rm CM}$).
Alors $\CM$ contient, comme sous-module d'indice~$2$,
la somme directe de l'espace des fonctions constantes
et de $\CMm$, espace des $a$ telles que $a(cg)=-a(g)$ pour tout $g\in G_\Q$.
Soient $\CM^0\subset\CM$ et
 $\CMm^0\subset\CMm$ les sous-espaces des fonctions centrales.

Alors $\C\otimes \CM^0$ admet pour base les caract\`eres d'Artin dont la fonction~$L$
ne s'annule pas en $0$.
Soient $\mu_{\rm Art}$ et, si $s\in\C$, $Z(\cdot,s)$ les formes $\C$-lin\'eaires
sur $\CM^0$ d\'efinies par $\mu_{\rm Art}(\chi)=\log{\goth f}_\chi$ (o\`u ${\goth f}_\chi$
est le conducteur de $\chi$) et $Z(\chi,s)=\frac{L'(\chi,s)}{L(\chi,s)}$.
L'inclusion $\Qbar\hookrightarrow\Qbar_p$ induit une inclusion
$G_{\Q_p}\hookrightarrow G_\Q$, et les formes lin\'eaires ci-dessus se d\'ecomposent
sous la forme
$$\mu_{\rm Art}=\sum_{p\in{\cal P}}\mu_{{\rm Art},p}\log p,\quad
Z(\cdot,s)=-\sum_{p\in{\cal P}} Z_p(\cdot,s)\log p$$
o\`u $\mu_{{\rm Art},p}$ et $Z_p(\cdot,s)$ sont des applications lin\'eaires
sur ${\rm LC}(G_{\Q_p},\Z)$, \`a valeurs dans $\Q$ et $\Q(p^{-s})$ respectivement.

On suppose que $F\subset K\subset\Qbar$, i.e.~que l'on dispose d'un plongement privil\'egi\'e
de $F$ et $K$ dans $\Qbar$.  Si $\sigma,\tau\in H_K$,
soit $b_{K,\sigma,\tau}\in\CMm$ la fonction sur $G_\Q$ donn\'ee par 
la formule\footnote{Elle est reli\'ee aux $a_{K,\sigma,\tau}$ de~\cite{fdp} par
la formule
$b_{K,\sigma,\tau}=a_{K,\sigma,\tau}-a_{K,\sigma, c\tau}$.}
\begin{equation}\label{truc5}
b_{K,\sigma,\tau}(g)=\begin{cases} 1&{\text{si $g\sigma=\tau$,}}\\
-1&{\text{si $g\sigma=c\tau$,}}\\
0&{\text{sinon.}}
\end{cases}
\end{equation}
On a $b_{K,\sigma,\tau}=-b_{K,\sigma,c\tau}=-b_{K,c\sigma,\tau}=b_{K,c\sigma,c\tau}$.

Si $\Phi\subset H_K$ est un type CM, soit 
$$b_{K,\sigma,\Phi}=\sum_{\tau\in\Phi} b_{K,\sigma,\tau}$$

\section{Anneaux de p\'eriodes}
On fixe un plongement
$\Qbar\hookrightarrow \Qbar_p$ pour chaque $p$.
Notons $\B_{{\rm dR},p}$ l'anneau des p\'eriodes $p$-adiques.
Soit ${\cal V}:=\{\infty, p\in{\cal P}\}$ l'ensemble des places de $\Q$.
Si $v\in{\cal V}$, on d\'efinit $\B_v$ par $\B_\infty=\C$ et
$\B_p=\B_{{\rm dR},p}$, si $p\in{\cal P}$. Finalement, on pose:
$$\B:=\prod\nolimits_v\B_v$$

Si ${\rm LC}$ d\'esigne
l'espace des fonctions localement constantes, on a des isomorphismes
$$\big(\B\otimes\Qbar\big)^\dual=
\varinjlim_{[L:\Q]<\infty}\big(\B\otimes L\big)^\dual\cong
{\rm LC}(G_\Q,\B^\dual)$$
On voit $\Qbar^\dual$ comme un sous-groupe de $\big(\B\otimes\Qbar\big)^\dual$
en envoyant $\alpha$ sur $1\otimes\alpha$; 
via l'isomorphisme ci-dessus,
cela correspond \`a envoyer $\alpha$ sur la fonction localement constante
$g\mapsto \alpha^g$.  L'injection $\Qbar^\dual \to {\rm LC}(G_\Q,\B^\dual)$
est $G_\Q$ \'equivariante, si on fait agir $h\in G_\Q$ sur ${\rm LC}(G_\Q,\B^\dual)$
par $$(h*\phi)(g)=\phi(gh)$$

On note $U_{\Qbar}\subset \Qbar^\dual$ la limite inductive des $U_L$.
On note $U^{\rm CM}$ la limite inductive des 
$U_F^+$, pour $F$ totalement r\'eel.
On d\'efinit $\iota:\Qbar^\dual\mapsto (\B\otimes\Qbar)^\dual$ en envoyant $\lambda$
sur la fonction
$g\mapsto \lambda^g\lambda^{cg}$, et donc $\iota({\Qbar}^\dual)$ n'est pas un sous-groupe
de $\Qbar^\dual\hookrightarrow (\B\otimes\Qbar)^\dual$. On note $\iota({\Qbar}^\dual)_0$
le sous-groupe des $\iota(\lambda)$ v\'erifiant $v_p(\lambda^g\lambda^{cg})=0$
pour tout $g\in G_\Q$; en particulier, $\iota(U_{\Qbar})\subset\iota(\Qbar^\dual)_0$.

La restriction de $\iota$ \`a $\Q^{\rm CM}$ est $\lambda\mapsto\lambda\lambda^c$
car $\lambda^{cg}=\lambda^{gc}$ pour tout $g$, si $\lambda\in U^{\rm CM}$.
Il s'ensuit que $\iota((\Q^{\rm CM})^\dual)\subset (\Q^{\rm CM})^\dual$
et que $\iota((\Q^{\rm CM})^\dual)_0\subset U^{\rm CM}\subset
\Qbar^\dual\hookrightarrow (\B\otimes\Qbar)^\dual$.

On dispose d'une application norme normalis\'ee
\begin{equation}\label{ncm}
{\rm N}^{\rm CM}:(\B\otimes\Qbar)^\dual\to(\B\otimes\Q^{\rm CM})^\dual\otimes_\Z\Q
\end{equation}
d\'efinie, via les identifications
$(\B\otimes\Qbar)^\dual\cong{\rm LC}(G_\Q,\B^\dual)$
et $(\B\otimes\Q^{\rm CM})^\dual\cong{\rm LC}(G^{\rm CM}_\Q,\B^\dual)$
par la formule suivante, pour $\phi$ fixe par $G_L$ avec $L/\Q$ galoisienne finie,
et $K$ plus grand sous-corps CM de $L$:
$$({\rm N}^{\rm CM}\phi)(g)=\Big(\prod_{h\in G_K/G_L}\phi(gh)\Big)^{1/[L:K]}$$
(l'exposant $1/[L:K]$ est la raison pour laquelle on a tensoris\'e
$(\B\otimes\Q^{\rm CM})^\dual$ par $\Q$; on v\'erifie facilement que le r\'esultat est ind\'ependant
de $L$ gr\^ace \`a cet exposant $1/[L:K]$).
Si $\lambda\in L^\dual$,
on a, car $K$ est un corps CM,
 $$\prod_{h\in G_K/G_L}\lambda^{gh}\lambda^{cgh}=
({\rm N}_{L/K}\lambda)^g({\rm N}_{L/K}\lambda)^{cg}
=(({\rm N}_{L/K}\lambda)({\rm N}_{L/K}\lambda)^c)^g$$
Il s'ensuit que ${\rm N}^{\rm CM}(\iota({\Qbar}^\dual)_0)\subset U^{\rm CM}\otimes \Q$,
ce qui induit un morphisme
$${\rm N}^{\rm CM}:(\B\otimes\Qbar)^\dual/\iota({\Qbar}^\dual)_0
\to\big((\B\otimes\Q^{\rm CM})^\dual/U^{\rm CM}\big)\otimes_\Z\Q$$
induisant l'identit\'e sur
$(\B\otimes\Q^{\rm CM})^\dual/U^{\rm CM}\subset
(\B\otimes\Qbar)^\dual/\iota({\Qbar}^\dual)_0$.

\section{P\'eriodes des vari\'et\'es ab\'eliennes CM}


\medskip
Si $\Phi\in H_K$ est un type CM, choisissons une vari\'et\'e ab\'elienne
$X_\Phi$, d\'efinie sur une extension finie $L$ de $K$, \`a multiplication complexe par $\O_K$,
de type CM $\Phi$, ayant bonne r\'eduction partout (c'est possible, quitte \`a agrandir $L$).
On fixe un mod\`ele ${\cal X}_\Phi$ de $X_\Phi$ sur $\O_K$.  Si $\sigma\in H_K$,
soit $H^\sigma({\cal X}_\Phi)$ le
$\O_L$-module des $\omega\in H^1_{\rm dR}({\cal X}_\Phi)$ sur lesquels $\alpha\in\O_K$
agit par multiplication par $\sigma(\alpha)$. Alors (quitte \`a agrandir encore $L$)
$H^\sigma({\cal X}_\Phi)$ est de rang~$1$
sur $\O_L$;
on en fixe une base $\omega_{\sigma}$, et donc $\omega_{\sigma}$ est unique
\`a multiplication pr\`es par un \'el\'ement de $U_L$.

Si $g\in{\rm Gal}(L/\Q)$, on note $X_\Phi^g$, $\omega_{\sigma}^g$, etc. les objets
d\'eduits par extension des scalaires.  Alors $X_\Phi^g$ est isog\`ene \`a
$X_{g\Phi}$ et $\omega_\sigma^g$ est une base de $H^{g\sigma}({\cal X}_{g\Phi})$.
Si $u_g\in H^1(X_\Phi^g(\C),\Z)$ notons 
$$\langle\omega_{\sigma}^g,u_g\rangle_v:= \int_{u_g}\omega_{\sigma}^g\in \B_v$$
Les formules de Riemann impliquent que 
$$\langle\omega_\sigma^g,u_g\rangle_v
\langle\omega_{c\sigma}^g,u_g\rangle_v=\alpha_g t_v$$
 o\`u $\alpha_g\in\Qbar$, et o\`u
$t_\infty=2i\pi$ et $t_p$ est le $2i\pi$ $p$-adique de Fontaine.

Les p\'eriodes $\langle\omega_{\sigma}^g,u_g\rangle_v$ d\'ependent de tous les choix
entrant dans leur d\'efinition; nous allons les normaliser, gr\^ace au th\'eor\`eme
de Deligne~\cite{D} sur les cycles de Hodge absolus
et son avatar $p$-adique~\cite{G,B,Og,W}, pour qu'elles ne d\'ependent
que de $\Phi$ et $\sigma$ (\`a une petite ind\'etermination pr\`es).
Commen\c{c}ons par remarquer que
 la conjugaison complexe $c$ induit un isomorphisme
$H^1(X_\Phi^g(\C),\Z)\cong H^1(X_\Phi^{cg}(\C),\Z)$.

\begin{lemm}\label{Kr3}
{\rm (i)} la quantit\'e
${\rm Per}(X^g_\Phi,\omega_{\sigma}^g,\omega_{c\sigma}^{g})=
({\rm Per}_v(X^g_\Phi,\omega_{\sigma}^g,\omega_{c\sigma}^{g}))_v\in\B^\dual$, o\`u
$${\rm Per}_v(X^g_\Phi,\omega_{\sigma}^g,\omega_{c\sigma}^{g}):=
\frac{\langle\omega_{\sigma}^g,u_g\rangle_v \langle\omega_{\sigma}^{cg},c\cdot u_g\rangle_v}
{\langle\omega_{c\sigma}^g,u_g\rangle_v 
\langle\omega_{c\sigma}^{cg},c\cdot u_g\rangle_v}$$
ne d\'epend pas du choix de $u_g$, 
et ${\rm Per}_\infty(X^g_\Phi,\omega_{\sigma}^g,\omega_{c\sigma}^{g})$ est r\'eelle~$>0$.


{\rm (ii)} $v_p({\rm Per}_p(X^g_\Phi,\omega_{\sigma}^g,\omega_{c\sigma}^{g}))=
2(Z_p(b_{K,g\sigma,g\Phi},0)-\mu_{{\rm Art},p}(b_{K,g\sigma,g\Phi}))$
\end{lemm}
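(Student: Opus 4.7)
Pour (i), je traiterais s\'epar\'ement l'ind\'ependance en $u_g$ et la positivit\'e au-dessus de la place infinie. La multiplication complexe fait de $H_1(X_\Phi^g(\C),\Q)$ un $K$-espace vectoriel de dimension~$1$, de sorte que tout autre choix est de la forme $\beta\cdot u_g$ avec $\beta\in K^\dual$. Sous cette transformation, et en utilisant que la conjugaison complexe commute \`a l'action de $\O_K$, les quatre accouplements se multiplient respectivement par $\beta^{g\sigma}$, $\beta^{cg\sigma}$, $\beta^{gc\sigma}$ et $\beta^{cgc\sigma}$. La centralit\'e ($gc=cg$) et l'involutivit\'e ($cgc=g$) de $c$ font que num\'erateur et d\'enominateur re\c{c}oivent le m\^eme facteur $\beta^{g\sigma}\beta^{cg\sigma}$, d'o\`u l'invariance du quotient. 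Pour la positivit\'e, la conjugaison complexe fournit
$$\langle\omega_\sigma^{cg},c\cdot u_g\rangle_\infty=\overline{\langle\omega_\sigma^g,u_g\rangle_\infty},$$
ainsi que l'identit\'e analogue pour $c\sigma$, ce qui permet de r\'e\'ecrire ${\rm Per}_\infty$ comme $|\langle\omega_\sigma^g,u_g\rangle_\infty|^2/|\langle\omega_{c\sigma}^g,u_g\rangle_\infty|^2$, manifestement r\'eel positif.

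Pour (ii), l'ingr\'edient central sera une formule individuelle du type
$$v_p(\langle\omega_\sigma^g,u_g\rangle_p)=Z_p(a_{K,g\sigma,g\Phi},0)-\mu_{{\rm Art},p}(a_{K,g\sigma,g\Phi}),$$
valable modulo des ind\'eterminations $v_p$-triviales pour le choix de $\omega_\sigma$ (unique \`a multiplication par $U_L$ pr\`es), les ind\'eterminations en $u_g$ \'etant absorb\'ees dans le ratio gr\^ace \`a (i). L'obtention de cette formule rel\`eve de la th\'eorie des p\'eriodes $p$-adiques de vari\'et\'es ab\'eliennes CM d\'evelopp\'ee dans~\cite{fdp}, en appui sur le th\'eor\`eme de Deligne~\cite{D} et ses avatars $p$-adiques~\cite{G,B,Og,W}.

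Restera alors un calcul combinatoire: sommer les valuations des quatre facteurs avec leurs signes, puis utiliser la relation $b_{K,\sigma,\Phi}=a_{K,\sigma,\Phi}-a_{K,\sigma,c\Phi}$ (d\'eduite de~(\ref{truc5})) avec $cgc=g$ et $c(g\Phi)=cg\Phi$ pour obtenir
$$v_p({\rm Per}_p)=(Z_p(\cdot,0)-\mu_{{\rm Art},p})\bigl(b_{K,g\sigma,g\Phi}+b_{K,cg\sigma,cg\Phi}\bigr).$$
L'identit\'e $b_{K,c\sigma,c\Phi}=b_{K,\sigma,\Phi}$ (lisible directement sur~(\ref{truc5})) fournit alors le facteur~$2$ attendu.

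L'obstacle principal sera la justification rigoureuse de la formule individuelle pour $v_p(\langle\omega_\sigma^g,u_g\rangle_p)$: il faudra contr\^oler pr\'ecis\'ement les normalisations (choix de $\omega_\sigma$ sur $\O_L$, comparaison avec des mod\`eles entiers, sens exact de $a_{K,g\sigma,g\Phi}$) dans l'application des r\'esultats sur les cycles de Hodge absolus $p$-adiques. Une fois cette formule en main, le reste de l'argument est purement combinatoire.
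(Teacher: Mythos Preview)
Your proposal is correct and follows essentially the same route as the paper: for (i), both arguments reduce independence in $u_g$ to the vanishing of the eigencharacter $g\sigma+cg\sigma-gc\sigma-cgc\sigma$ (via centrality of $c$) and deduce positivity from the fact that the two factors in the numerator (resp.\ denominator) are complex conjugates; for (ii), both invoke the individual valuation formula from~\cite[th.~II.1.1]{fdp} and then combine the four terms via the $a$--$b$ relations and $b_{K,cg\sigma,cg\Phi}=b_{K,g\sigma,g\Phi}$. The only point worth sharpening is your ``modulo des ind\'eterminations $v_p$-triviales'': the precise statement of~\cite[th.~II.1.1]{fdp} carries an extra term $v_p(\omega_\sigma^g)$, and it is exactly the integral normalization of $\omega_\sigma$ (as a basis of $H^\sigma({\cal X}_\Phi)$ over $\O_L$) that forces this term to vanish --- this is the content of your anticipated ``obstacle principal'', and it dissolves once you quote that theorem with this normalization and choose $u_g$ to generate $T_p(X_\Phi^g)$ (licit by your part~(i)).
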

\begin{proof}
Que ${\rm Per}_\infty(X^g_\Phi,\omega_{\sigma}^g,\omega_{c\sigma}^{g})$ soit r\'eelle~$>0$
r\'esulte de ce que $\langle\omega_{\sigma}^g,u_g\rangle_\infty$
et $ \langle\omega_{\sigma}^{cg},c\cdot u_g\rangle_\infty$ sont des nombres complexes
conjugu\'es (et de m\^eme en rempla\c{c}ant $\sigma$ par $c\sigma$).
Que ${\rm Per}(X^g_\Phi,\omega_{\sigma}^g,\omega_{c\sigma}^{g})$
ne d\'epende pas de $u_\sigma$ est le contenu de \cite[lemme II.2.1]{fdp}: cela r\'esulte
de ce que $H^1(X_\Phi^g(\C),\Z)$
est un $K$-module de rang~$1$, et que le r\'esultat est vecteur propre pour l'action de $K$
pour le caract\`ere $g\sigma+cg\sigma-gc\sigma-cgc\sigma$ qui est trivial puisque $c$ commute
\`a tout.


Pour prouver le (ii), on utilise le (i) de \cite[th.\,II.1.1]{fdp} selon lequel,
$$v_p(\langle\omega^g_{\Phi,\sigma},u\rangle_p)=v_p(\omega^g_{g\Phi,g\sigma})+
Z_p(a_{K,g\sigma,g\Phi},0)-\mu_{{\rm Art},p}(a_{K,g\sigma,g\Phi})$$
 si
$u$ est un g\'en\'erateur du $\Z_p\otimes\O_K$-module $T_p(X_\Phi^g)$ (et idem pour les autres termes).
On a impos\'e $v_p(\omega_{\sigma}^g)=0$ en prenant une base de $H^\sigma({\cal X}_\Phi)$,
et comme le r\'esultat ne d\'epend pas du choix de $u_g$, on peut supposer
que $u_g$ est un g\'en\'erateur du $\Z_p\otimes\O_K$-module $T_p(X_\Phi^g)$ (et idem pour
les autres termes).  Le r\'esultat est alors une cons\'equence de la lin\'earit\'e
de $Z_p(\cdot,0)$ et $\mu_{{\rm Art},p}$, et des relations
$b_{K,g\sigma,g\Phi}=a_{K,g\sigma,g\Phi}-a_{K,gc\sigma,g\Phi}$
et $b_{K,cg\sigma,cg\Phi}=b_{K,g\sigma,g\Phi}$.
\end{proof}

\begin{theo}\label{per1}
{\rm (i)}
Il existe une unique application multiplicative 
$$\Omega:\CMm\to (\B\otimes\Qbar)^\dual_+/\iota({\Qbar}^\dual)_0$$
v\'erifiant,
pour tous $K$ corps CM, $\Phi\subset H_K$ type CM, $\sigma\in H_K$ et $g\in G_\Q${\rm:}
$$\Omega(b_{K,\sigma,\Phi})(g)={\rm Per}(X^g_\Phi,\omega_{\sigma}^g,\omega_{c\sigma}^g)$$

{\rm (ii)} L'application multiplicative
$$\Omega^{\rm CM}={\rm N}^{\rm CM}\circ \Omega:
\CMm\to \big((\B\otimes\Q^{\rm CM})^\dual/U^{\rm CM}\big)\otimes_\Z\Q$$
est $G_\Q$ \'equivariante {\rm(i.e.~$h*\Omega^{\rm CM}(b)=\Omega^{\rm CM}(h\cdot b)$,
avec $(h\cdot b)(g):=b(h^{-1}gh)$)}, et
co\"{\i}ncide avec $\Omega$ dans 
$\big((\B\otimes\Qbar)^\dual\otimes_\Z\Q\big)/\iota({\Qbar}^\dual)_0$
\end{theo}
\begin{proof}
Commen\c{c}ons par remarquer que le membre de droite est un \'el\'ement bien d\'efini
de $(\B\otimes\Qbar)^\dual_+/\iota({\Qbar}^\dual)_0$: 
on ne peut changer $\omega_{\sigma}$ et $\omega_{c\sigma}$ que par multiplication
par des \'el\'ements $\lambda,\lambda_c$ de $U_L$, ce qui multiplie
$\Omega(b_{K,\sigma,\Phi})(g)$ par $(\lambda\lambda_c^{-1})^g(\lambda\lambda_c^{-1})^{cg}$.  
Si on remplace $L$ par une extension, le r\'esultat ne change pas.
Enfin, si on remplace $X_\Phi$ par une vari\'et\'e isog\`ene $X'_\Phi$, l'isog\'enie
$\alpha:X_\Phi\to X'_\Phi$ fournit $\lambda_\nu\in L^\dual$ pour $\nu=\sigma,c\sigma$, tels que
$\alpha^\dual\omega'_{\Phi,\nu}=\lambda_\nu\omega_{\nu}$. 
Si $\lambda=\lambda_\sigma\lambda_{c\sigma}^{-1}$, alors
${\rm Per}((X'_\Phi)^g,(\omega'_{\sigma})^g,(\omega'_{c\sigma})^g)=\lambda^g\lambda^{cg}
{\rm Per}(X^g_\Phi,\omega_{\sigma}^g,\omega_{c\sigma}^g)$ et on a $v_p(\lambda^g\lambda^{cg})=0$
pour tous $g$ et $p$, gr\^ace au (ii) du lemme~\ref{Kr3}.

Maintenant,
l'unicit\'e est imm\'ediate car les $b_{K,\sigma,\Phi}$ engendrent le $\Z$-module $\CMm$.
Passons \`a la preuve de l'existence.
Il s'agit de v\'erifier que les relations satisfaites par les $b_{K,\sigma,\Phi}$
le sont encore par les $\Omega(b_{K,\sigma,\Phi})$.
Ces relations sont engendr\'ees par les relations suivantes:

$\bullet$ $b_{K,\sigma,\Phi}=-b_{K,c\sigma,\Phi}$.

$\bullet$ Si $h:K'\to K$ est un isomorphisme, alors $b_{K',\sigma h,\Phi h}=b_{K,\sigma,\Phi}$.

$\bullet$ Si $K\subset K'$, on a $b_{K',\sigma',\Phi'}=b_{K,\sigma,\Phi}$ si
$\sigma=\sigma'_{|K}$ et $\Phi'=\{\tau\in H_{K'},\ \tau_{|K}\in\Phi\}$.

$\bullet$ Si $\sum_i\Phi_i=\sum_j\Phi'_j$, alors 
$\sum_i b_{K,\sigma,\Phi_i}=\sum_j b_{K,\sigma,\Phi'_j}$.

La formule $\Omega(b_{K,\sigma,\Phi})=\Omega(b_{K,c\sigma,\Phi})^{-1}$ est imm\'ediate
sur la d\'efinition, et l'identit\'e $\Omega(b_{K',\sigma h,\Phi h})=\Omega(b_{K,\sigma,\Phi})$
 est la traduction
de ce que $X_{K,\Phi}$ est aussi \`a multiplication complexe par $K'$, de type CM $\Phi h$;
les deux premi\`eres relations sont donc pr\'eserv\'ees par $\Omega$.

La troisi\`eme
identit\'e \`a v\'erifier r\'esulte de ce que
$X_{K',\Phi'}$ est un produit de $[K':K]$ vari\'et\'es
ab\'eliennes isog\`enes \`a $X_{K,\Phi}$ (\'ecrire un id\'eal de $\O_{K'}$,
vu
comme $\O_K$-module, sous la forme d'une somme directe d'id\'eaux de $\O_K$). Cela
fournit une \'egalit\'e \`a multiplication pr\`es
par $\lambda^g\lambda^{cg}$, avec $\lambda\in\Qbar^\dual$,
et on montre que $\lambda\in \iota({\Qbar}^\dual)_0$ 
en constatant que $v_p(\lambda^g\lambda^{cg})=0$,
pour tous $p$ et $g$, gr\^ace au (ii) du lemme~\ref{Kr3}.
 
Il reste la derni\`ere identit\'e \`a v\'erifier (c'est aussi la plus d\'elicate).
Si $\Psi$ est un type CM de $K$, notons $h^1({\Psi})$ le 
le motif de Hodge absolu associ\'e \`a $H^1(X_\Psi)$.
Alors $h^1(\Psi)$ est de rang~$1$ sur $K$.
Si $\sum_i\Phi_1=\sum_j\Phi'_j$, et si les $X_\Psi$ pour $\Psi=\Phi_i,\Phi'_j$ sont d\'efinies sur $L$,
le th\'eor\`eme de Deligne fournit un isomorphisme 
$\otimes_i h^1(\Phi_i)\cong \otimes_j h^1(\Phi'_j)$ de motifs \`a coefficients dans $K$, d\'efinis
sur $L$.
Il existe alors $\lambda_\nu\in L^\dual$ tel que $(\otimes_i\omega_{i,\sigma})=\lambda_\nu
(\otimes_j\omega_{j,\sigma})$, si $\nu=\sigma,c\sigma$ (ce $\lambda_\nu$ d\'epend du choix
des $\omega_{\nu}$).

Si $\lambda=\lambda_{\rm id}\lambda_c^{-1}$,
alors $\prod_i {\rm Per}(X^g_{\Phi_i},\omega_{i,\sigma}^g,\omega_{i,c\sigma}^g)=
\lambda^g\lambda^{cg}\prod_j {\rm Per}(X^g_{\Phi'_j},\omega_{j,\sigma}^g,\omega_{j,c\sigma}^g)$,
pour tout $g\in G_\Q$.
Pour conclure, il reste \`a v\'erifier que $\iota(\lambda)\in\iota(\Qbar^\dual)_0$ ou,
autrement dit, que $v_p(\lambda^g\lambda^{cg})=0$ pour tous $p$ et $g$;
cela r\'esulte du (ii) du lemme~\ref{Kr3} et de la lin\'earit\'e
de $Z_p(\cdot,0)$ et $\mu_{{\rm Art},p}$.

Ceci prouve le (i); prouvons le (ii).
Si $K'$ est le plus
grand sous-corps CM de $L$, alors $K\subset K'$ et
$$(\Omega^{\rm CM}(b_{K,\sigma,\Phi}))(g)=
\Big(\prod_{h\in G_{K'}/G_L}{\rm Per}((X_\Phi^h)^g,(\omega_\sigma^h)^g,(\omega_{c\sigma}^h)^g)\Big)
^{1/[L:K']}$$
Or les $X_\Phi^h$ sont toutes isog\`enes \`a $X_\Phi$ (car $h \Phi=\Phi$),
et donc 
$${\rm Per}((X_\Phi^h)^g,(\omega_\sigma^h)^g,(\omega_{c\sigma}^h)^g)=
(\Omega(b_{K,\sigma,\Phi}))(g)\ {\rm mod}\  \iota(\Qbar^\dual)_0$$
 pour tout $h\in G_{K'}/G_L$.
Le second \'enonc\'e du (ii) s'en d\'eduit.

Il reste \`a v\'erifier la $G_\Q$-\'equivariance, et il suffit de la v\'erifier pour
les $b_{K,\sigma,\Phi}$.  Celle-ci r\'esulte des identit\'es
$b_{K,h\sigma,h\Phi}=h\cdot b_{K,\sigma,\Phi}$ et
\begin{align*}
(\Omega^{\rm CM}(b_{K,\sigma,\Phi}))(gh)={\rm Per}(X_\Phi^{gh},\omega_\sigma^{gh},\omega_{c\sigma}^{gh})
={\rm Per}(X_{h\Phi}^{g},\omega_{h\sigma}^{g},\omega_{ch\sigma}^{g})=
(\Omega^{\rm CM}(b_{K,h\sigma,h\Phi}))(g)
\end{align*}
Ceci termine la preuve du th\'eor\`eme.
\end{proof}

\section{Un r\'egulateur augment\'e}
Notons $(\C\otimes\Q^{\rm CM})^\dual_+$ l'ensemble des $\phi\in{\rm LC}(G_\Q^{\rm CM},\C^\dual)$
v\'erifiant $\phi(cg)=\phi(g)\in\R_+^\dual$, pour tout $g$.
Le (i) du lemme~\ref{Kr3} implique que
la composante $\Omega_\infty^{\rm CM}$ de $\Omega^{\rm CM}$ (i.e.~sa projection sur
$((\C\otimes\Q^{\rm CM})^\dual/U^{\rm CM})\otimes_\Z\Q$)
est \`a valeurs dans $((\C\otimes\Q^{\rm CM})^\dual_+/U^{\rm CM})\otimes_\Z\Q$.

\begin{lemm}\label{per2}
Soit $K$ un corps CM.

{\rm (i)}
$\Omega^{\rm CM}_\infty(b_{K,{\rm id},{\rm id}})$ admet un rel\`evement 
$\Omega_\infty(K)\in(\C\otimes\Q^{\rm CM})^\dual_+$ invariant par $G_K^{\rm CM}$.

{\rm (ii)} $\Omega_\infty(K)$ est unique \`a multiplication pr\`es par un \'el\'ement
de $U_K^+\otimes\Q$.
\end{lemm}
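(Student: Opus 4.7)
Pour (i), j'observe d'abord que $b:=b_{K,{\rm id},{\rm id}}$ est invariant par l'action $(h\cdot b)(g)=b(h^{-1}gh)$ de $G_K$ sur $\CMm$: cela résulte de l'identité $h\cdot b_{K,\sigma,\tau}=b_{K,h\sigma,h\tau}$ (utilisée dans la preuve du th.\,\ref{per1}) et du fait que $h_{|K}={\rm id}$ pour tout $h\in G_K$. La $G_\Q$-équivariance du th.\,\ref{per1}(ii) entraîne alors que $\Omega^{\rm CM}_\infty(b)$ est $G_K^{\rm CM}$-invariant dans $((\C\otimes\Q^{\rm CM})^\dual_+/U^{\rm CM})\otimes\Q$. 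L'enjeu est donc de construire un relèvement qui soit effectivement invariant, et pas seulement modulo $U^{\rm CM}\otimes\Q$.

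Pour cela, je partirais d'un relèvement quelconque $\tilde\Omega\in(\C\otimes\Q^{\rm CM})^\dual_+$. Celui-ci étant localement constant, il existe une extension CM finie galoisienne $L'/K$ telle que $\tilde\Omega$ se factorise par $G_\Q^{\rm CM}/G_{L'}^{\rm CM}$; l'action de $G_K^{\rm CM}$ sur $\tilde\Omega$ se réduit alors à une action du groupe fini $\Gamma={\rm Gal}(L'/K)$, et le défaut d'invariance définit un $1$-cocycle $\Gamma\to U^{\rm CM}\otimes\Q$. Comme $H^1(\Gamma,V)=0$ pour tout $\Q$-espace vectoriel $V$ muni d'une action d'un groupe fini, ce cocycle est un cobord; concrètement, la moyenne multiplicative
$$\Omega_\infty(K):=\prod_{h\in\Gamma}(h*\tilde\Omega)^{1/|\Gamma|}$$
est $\Gamma$-invariante, reste dans $(\C\otimes\Q^{\rm CM})^\dual_+$ (les racines $|\Gamma|$-ièmes de nombres réels positifs étant bien définies) et relève encore $\Omega^{\rm CM}_\infty(b)$ puisque son quotient par $\tilde\Omega$ est un produit à exposants rationnels d'éléments de $U^{\rm CM}\otimes\Q$.

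Pour (ii), deux tels relèvements diffèrent par un élément de $(U^{\rm CM}\otimes\Q)^{G_K^{\rm CM}}$. Un $u\in U^{\rm CM}$ fixé par $G_K$ appartient à $K$; étant de plus une unité totalement positive d'un sous-corps totalement réel de $\Q^{\rm CM}$, il est nécessairement dans $K\cap\R=F$ (le sous-corps totalement réel maximal de $K$), et donc dans $U_F^+=U_K^+$. On en déduit $(U^{\rm CM}\otimes\Q)^{G_K^{\rm CM}}=U_K^+\otimes\Q$, d'où l'unicité voulue. Le point le plus délicat me semble être la construction du relèvement en (i): c'est l'annulation du $H^1$ d'un groupe fini à coefficients dans un espace vectoriel rationnel qui permet le passage d'une invariance modulo $U^{\rm CM}\otimes\Q$ à une invariance stricte.
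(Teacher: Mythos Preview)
Your proof is correct and follows the same strategy as the paper: observe that $b_{K,{\rm id},{\rm id}}$ is $G_K^{\rm CM}$-fixed, lift arbitrarily, then average over $G_K^{\rm CM}/G_{L'}^{\rm CM}$ to force invariance, and for (ii) identify $(U^{\rm CM}\otimes\Q)^{G_K^{\rm CM}}$ with $U_K^+\otimes\Q$. Your cohomological phrasing via $H^1(\Gamma,V)=0$ is a helpful gloss, but the concrete construction---the multiplicative Reynolds operator---is exactly the one the paper writes down.
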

\begin{proof}
Remarquons que $h\cdot b_{K,{\rm id},{\rm id}}=b_{K,{\rm id},{\rm id}}$ si $h\in G_K^{\rm CM}$.
Il s'ensuit que tout rel\`evement de $\Omega^{\rm CM}_\infty(b_{K,{\rm id},{\rm id}})$ est
invariant par $G_K^{\rm CM}$ modulo $U^{\rm CM}\otimes\Q$.
Partons donc d'un tel rel\`evement $\tilde\Omega_\infty(K)$; il est invariant
par $G_L^{\rm CM}$ o\`u $L\subset K$ est un corps CM.
Mais alors 
$$\Big(\prod_{h\in G_K^{\rm CM}/G_L^{\rm CM}}h*\tilde\Omega_\infty(K)\Big)^{1/[L:K]}$$
 est
invariant par $G_K^{\rm CM}$ et est un rel\`evement de $\Omega^{\rm CM}_\infty(b_{K,{\rm id},{\rm id}})$
puisque chacun des termes du produit en est un.  Ceci prouve le (i).

Le (ii) est une cons\'equence du fait qu'un \'el\'ement de $U^{\rm CM}\otimes_\Z\Q$
fixe par $G_K^{\rm CM}$ appartient \`a $U_K^+\otimes\Q$ (c'est imm\'ediat si on ne tensorise
pas par $\Q$, et on peut toujours \'elever notre \'el\'ement \`a une puissance enti\`ere
pour passer de $U^{\rm CM}\otimes_\Z\Q$ \`a $U^{\rm CM}$). 
\end{proof}

Comme $\Omega_\infty(K)$ est invariant par $G_K^{\rm CM}$, on peut le voir
comme un \'el\'ement de $(\R_+^\dual)^{H_K}$ ou encore comme un \'el\'ement
de $(\C\otimes K)^\dual_+$. Vu dans $(\R_+^\dual)^{H_K}$, on a
$$\Omega_\infty(K)\sim(\Omega^{\rm CM}_\infty(b_{K,\sigma,\sigma}))_{\sigma\in H_K}$$

\begin{prop}\label{Kr6}
Soit $v_1,\dots,v_{d-1}$ formant une famille libre dans $U_K$, alors
$$R_K{\rm ht}(\chi)=\frac{1}{[U_K:\langle v_1,\dots,v_{d-1}\rangle]}
|\det({\rm Log}|v_1|^2,\dots{\rm Log}|v_{d-1}|^2,{\rm Log}(\Omega_\infty(K)))|$$
\end{prop}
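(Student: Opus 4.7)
The plan is to evaluate the $d\times d$ determinant by first collapsing it, via the product formula for units, to the classical $(d-1)\times(d-1)$ regulator determinant times an extra scalar, and then to identify that scalar with ${\rm ht}(\chi)$ through the CM-period construction underlying $\Omega_\infty(K)$.

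For the linear-algebraic step, I would use that for each unit $v_i\in U_K$ one has $\sum_v\log|\sigma_v(v_i)|^2=2\log|{\rm N}_{K/\Q}(v_i)|=0$, the sum running over the $d$ complex places of $K$. Adding all rows to the row indexed by the privileged place $v_0$ (coming from $K\hookrightarrow\Qbar\hookrightarrow\C$) kills the first $d-1$ entries of that row, while the last entry becomes $\sum_v\log(\Omega_\infty(K))_v$. Expanding along this row and comparing with the definition of $R_K$ recalled in \S1 yields
$$\frac{|\det({\rm Log}|v_1|^2,\dots,{\rm Log}|v_{d-1}|^2,{\rm Log}(\Omega_\infty(K)))|}{[U_K:\langle v_1,\dots,v_{d-1}\rangle]}
=R_K\cdot\bigl|{\textstyle\sum_v\log(\Omega_\infty(K))_v}\bigr|,$$
so the proposition reduces to the identity $\sum_v\log(\Omega_\infty(K))_v=\pm{\rm ht}(\chi)$. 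Since $(\Omega_\infty(K))_{c\sigma}=(\Omega_\infty(K))_\sigma$, the left-hand side is half of $\sum_{\sigma\in H_K}\log(\Omega_\infty(K))_\sigma$. The relation $\Omega_\infty(K)\sim(\Omega^{\rm CM}_\infty(b_{K,\sigma,\sigma}))_\sigma$ from the discussion after Lemma~\ref{per2}, together with the additivity of $\log\Omega^{\rm CM}_\infty$ and its identification with the linear form ${\rm ht}$ on central functions in $\CMm$ (this is the link between ${\rm ht}$ and CM periods recorded in \cite[th.\,0.3]{fdp}), rewrites it as $\tfrac{1}{2}{\rm ht}(\sum_{\sigma\in H_K}b_{K,\sigma,\sigma})$. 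A direct counting argument with the definition of $b_{K,\sigma,\tau}$ then shows the centrality identity $\sum_{\sigma\in H_K}b_{K,\sigma,\sigma}=2\,{\rm tr}\,{\rm Ind}_{G_F}^{G_\Q}\chi$ as functions on $G_\Q$: for each $\sigma\in H_F$ fixed by $g$, its two lifts $\tilde\sigma,c\tilde\sigma\in H_K$ together contribute $\pm 2$ on the left, with sign equal to the value of $\chi$ on $\sigma^{-1}g\sigma$, which is precisely twice the Frobenius character formula. Since $L({\rm Ind}_{G_F}^{G_\Q}\chi,s)=L(\chi,s)$, one has ${\rm ht}({\rm Ind}_{G_F}^{G_\Q}\chi)={\rm ht}(\chi)$, giving $\sum_v\log(\Omega_\infty(K))_v={\rm ht}(\chi)$ and the proposition.

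The main obstacle is the identification of $\log\Omega^{\rm CM}_\infty$ with ${\rm ht}$ on central functions in $\CMm$. This is where the nontrivial arithmetic content enters, namely that ${\rm ht}(\chi)$, originally defined via $L'(\chi,0)/L(\chi,0)$, is really a linear combination of logarithms of normalized CM periods with all $p$-adic and archimedean normalizations matched up as in Lemma~\ref{Kr3}(ii). Once this identification is available, the rest of the argument is a mixture of linear algebra using the product formula and a character-theoretic bookkeeping in $\CM$.
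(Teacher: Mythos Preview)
Your argument is essentially the paper's: row-reduce via the product formula, identify $\sum_\sigma b_{K,\sigma,\sigma}$ with (a multiple of) ${\rm Ind}_{G_F}^{G_\Q}\chi$, and invoke the identification of $\log\Omega^{\rm CM}_\infty$ with ${\rm ht}$ on central functions (the paper cites Prop.~\ref{CM5} for this last step; you cite \cite[th.\,0.3]{fdp} directly). The only cosmetic differences are that the paper sums over $H_F$ rather than $H_K$, thereby absorbing your factor of~$2$, and that it opens by noting the determinant is independent of the choice of lift $\Omega_\infty(K)$, a point you leave implicit.

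One correction to your final paragraph: in this paper ${\rm ht}$ is \emph{by definition} the linear form built from logarithms of normalised CM periods (see the footnote after~(\ref{truc0}) and \cite[th.\,0.3]{fdp}), not something defined via $L'(\chi,0)/L(\chi,0)$. Hence the identification you call the ``main obstacle'' is exactly the translation exercise recorded in Prop.~\ref{CM5}, not a deep arithmetic input. The genuinely arithmetic statement---the link between ${\rm ht}(\chi)$ and $L'(\chi,0)/L(\chi,0)$---is the content of formula~(\ref{truc0}) and is not used anywhere in the proof of Prop.~\ref{Kr6}.
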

\begin{proof}
Remarquons que le membre de droite ne d\'epend pas du choix de $\Omega_\infty(K)$ puisque
cette quantit\'e est unique \`a multiplication pr\`es par un \'el\'ement de $U_K^+\otimes\Q$,
dont le logarithme est une combinaison lin\'eaire des autres colonnes.

Si on fait la somme des lignes, tous les termes sont $0$ sauf le terme de la derni\`ere
colonne qui vaut $\log({\rm N}_{K/\Q}\Omega_\infty(K))$.
Vu la d\'efinition de $R_K$, il suffit de v\'erifier que
$\log({\rm N}_{K/\Q}\Omega_\infty(K))={\rm ht}(\chi)$.

La fonction $b_{K,\rm id,id}$ est nulle en dehors de $G_F$ et co\"{\i}ncide avec $\chi$
sur $G_F$.  Si $h\in G_\Q$, on a $b_{K,h\cdot{\rm id},h\cdot{\rm id}}=b_{K,\rm id,id}(h^{-1}gh)$.
On en d\'eduit que $\sum_{\sigma\in H_F}b_{K,\sigma,\sigma}={\rm Ind}_{G_F}^{G_\Q}\chi$.
En particulier, $\sum_{\sigma\in H_F}b_{K,\sigma,\sigma}$ est une fonction centrale,
et donc l'identit\'e que l'on cherche \`a d\'emontrer est un cas particulier du
dernier point de la prop.\,\ref{CM5}.
\end{proof}

\section{P\'eriodes absolues}
Comme dans le th.\,\ref{per1},
on fait agir $G_\Q$ sur $\CMm$ par conjugaison int\'erieure sur la variable:
 $(h\cdot a)(g)=a(h^{-1}gh)$.
Si $a\in\CMm$, on note $\Gamma(a)\subset G_\Q$ le stabilisateur de $a$, et
on pose $F(a)=\Qbar^{\Gamma(a)}$ (alors $F(a)$ est totalement r\'eel car
l'action de $G_\Q$ sur $\CMm$ se factorise par $G_\Q^{\rm CM}$ 
dans lequel $c$ est central). 
\begin{rema}\label{CM4}
(i) $a\in \CMm^0$ si et seulement si $F(a)=\Q$.

(ii) Si $h\in G_\Q/\Gamma(a)=H_{F(a)}$, alors $\Gamma(h\cdot a)=h\Gamma(a)h^{-1}$
et $F(h\cdot a)=F(a)^h$.
\end{rema}

Si $a\in\CMm$, choisissons un rel\`evement $\tilde\Omega_\infty(a)$ de
$\Omega_\infty^{\rm CM}(a)$ dans $(\C\otimes\Q^{\rm CM})^\dual_+$ comme ci-dessus.
Il existe alors un corps CM $K$ tel que $\tilde\Omega_\infty(a)$ soit
fixe par $G_K^{\rm CM}$.  On pose, comme ci-dessus,
$$\Omega_\infty^{\rm abs}(a)=
\Big(\prod_{h\in G_{F(a)}^{\rm CM}/G_K^{\rm CM}}h*\tilde\Omega_\infty(a)\Big)^{1/[K:F(a)]}$$

\begin{prop}\label{CM5}
{\rm (i)} 
$\Omega_\infty^{\rm abs}(a)$ est fixe par $G_{F(a)}^{\rm CM}$ et ne d\'epend pas,
\`a multiplication pr\`es par un \'el\'ement de $U_{F(a)}^+\otimes_\Z\Q$,
du choix de $\tilde\Omega_\infty(a)$.

{\rm (ii)}
$\Omega_\infty^{\rm abs}:\CMm\to\R_+^\dual$ v\'erifie les propri\'et\'es suivantes:

\quad $\bullet$ si $a\in\CMm$, alors
$\Omega_\infty^{\rm abs}(a)$ 
a pour image $\Omega^{\rm CM}_\infty(a)$ mod~$U^{\rm CM}\otimes_\Z\Q$.

\quad $\bullet$ $\Omega_\infty^{\rm abs}(a+b)
=({\rm N}_{F(a,b)/F(a+b)}(\Omega_\infty^{\rm abs}(a)\Omega_\infty^{\rm abs}(b)))^{1/[F(a,b):F(a+b)]}$
mod~$U_{F(a+b)}^+\otimes\Q$.

\quad $\bullet$ Si $a\in\CMm^0$, alors $\Omega_\infty^{\rm abs}(a)=\exp {\rm ht}(a)$.
\end{prop}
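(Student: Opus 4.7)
Le plan est de reprendre le sch\'ema de la preuve du lemme \ref{per2}. Pour (i), quitte \`a agrandir $K$, on peut supposer $F(a)\subset K$ (c'est possible car tout rel\`evement de $\Omega^{\rm CM}_\infty(a)$ dans $(\C\otimes\Q^{\rm CM})^\dual_+$ est d\'ej\`a $G_{F(a)}^{\rm CM}$-invariant modulo $U^{\rm CM}\otimes\Q$). Le produit index\'e par $G_{F(a)}^{\rm CM}/G_K^{\rm CM}$ est alors bien d\'efini (chaque terme ne d\'ependant que de la classe puisque $\tilde\Omega_\infty(a)$ est fixe par $G_K^{\rm CM}$), et la multiplication \`a gauche par un \'el\'ement quelconque de $G_{F(a)}^{\rm CM}$ permutant les classes, le produit et sa racine $[K:F(a)]$-i\`eme dans $(\C\otimes\Q^{\rm CM})^\dual_+$ sont fixes par $G_{F(a)}^{\rm CM}$. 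Pour l'unicit\'e, remplacer $\tilde\Omega_\infty(a)$ par $u\cdot\tilde\Omega_\infty(a)$, avec $u\in U^{\rm CM}\otimes\Q$, multiplie $\Omega_\infty^{\rm abs}(a)$ par $(\prod_h h*u)^{1/[K:F(a)]}$, qui est un \'el\'ement $G_{F(a)}^{\rm CM}$-invariant de $U^{\rm CM}\otimes\Q$, donc de $U_{F(a)}^+\otimes\Q$ par l'argument du (ii) du lemme \ref{per2}.

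Pour le premier point de (ii), chaque facteur $h*\tilde\Omega_\infty(a)$ se r\'eduit modulo $U^{\rm CM}\otimes\Q$ en $\Omega^{\rm CM}_\infty(h\cdot a)=\Omega^{\rm CM}_\infty(a)$, par l'\'equivariance du (ii) du th\'eor\`eme \ref{per1} et l'\'egalit\'e $h\cdot a=a$ pour $h\in G_{F(a)}^{\rm CM}$ ; le produit de $[K:F(a)]$ tels facteurs \'elev\'e \`a la puissance $1/[K:F(a)]$ redonne donc bien $\Omega^{\rm CM}_\infty(a)$. Pour le second point, l'id\'ee est de choisir un corps CM $K\supset F(a,b)$ fixant \`a la fois $\tilde\Omega_\infty(a)$ et $\tilde\Omega_\infty(b)$ et d'utiliser la multiplicativit\'e de $\Omega^{\rm CM}_\infty$ (cons\'equence du (i) du th\'eor\`eme \ref{per1} et de celle de ${\rm N}^{\rm CM}$) : le produit $\Omega^{\rm abs}_\infty(a)\Omega^{\rm abs}_\infty(b)$ est un rel\`evement $G_{F(a,b)}^{\rm CM}$-invariant de $\Omega^{\rm CM}_\infty(a+b)$, et l'op\'eration ${\rm N}_{F(a,b)/F(a+b)}(\cdot)^{1/[F(a,b):F(a+b)]}$ en fait un rel\`evement $G_{F(a+b)}^{\rm CM}$-invariant. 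Comme $\Omega^{\rm abs}_\infty(a+b)$ est lui aussi un rel\`evement $G_{F(a+b)}^{\rm CM}$-invariant de $\Omega^{\rm CM}_\infty(a+b)$, leur rapport est un \'el\'ement $G_{F(a+b)}^{\rm CM}$-invariant de $U^{\rm CM}\otimes\Q$, donc de $U_{F(a+b)}^+\otimes\Q$.

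Le troisi\`eme point est le plus d\'elicat, et la principale difficult\'e attendue de la preuve. Si $a\in\CMm^0$, on a $F(a)=\Q$, de sorte que $\Omega^{\rm abs}_\infty(a)\in(\C\otimes\Q^{\rm CM})^\dual_+$ est $G_\Q^{\rm CM}$-invariant et s'identifie \`a un \'el\'ement de $\R_+^\dual$, sans ambigu\"{\i}t\'e puisque $U_\Q^+\otimes\Q=\{1\}$. Il s'agit alors d'identifier ce nombre r\'eel avec $\exp{\rm ht}(a)$, et l'approche serait d'utiliser la lin\'earit\'e de chacun des deux membres : il suffit de v\'erifier l'identit\'e sur les combinaisons des $b_{K,\sigma,\Phi}$ qui fournissent une fonction centrale. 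La composante archim\'edienne de $\Omega(b_{K,\sigma,\Phi})$ est calcul\'ee par le (i) du lemme \ref{Kr3} en termes de p\'eriodes de $X_\Phi$, et le moyennage qui d\'efinit $\Omega^{\rm abs}_\infty$ correspond, apr\`es sommation, au moyennage galoisien implicite dans la construction de ${\rm ht}$ \`a partir des logarithmes de p\'eriodes normalis\'ees (cf.~\cite[th.\,0.3]{fdp} et la note de bas de page d\'efinissant ${\rm ht}$). La partie la plus technique consiste \`a contr\^oler que les facteurs de normalisation $1/[K:F(a)]$ et ceux provenant de ${\rm N}^{\rm CM}$ co\"{\i}ncident avec les conventions de \cite{fdp} ; une fois les deux d\'efinitions mises en correspondance, c'est une v\'erification de comptabilit\'e directe.
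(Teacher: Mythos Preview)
Votre d\'emonstration est correcte et suit la m\^eme approche que l'article, qui se contente de d\'eclarer (i) et les deux premiers points de (ii) imm\'ediats, et de pr\'esenter le troisi\`eme point comme un simple exercice de traduction entre les d\'efinitions de ${\rm ht}$ et de $\Omega_\infty^{\rm abs}$. Le seul d\'etail technique que l'article rend explicite et que vous n'\'evoquez qu'implicitement est que, dans cette traduction, les termes $v_p(\omega_\sigma^g)-v_p(\omega_{c\sigma}^g)$ figurant dans la d\'efinition de ${\rm ht}$ de~\cite{fdp} s'annulent ici gr\^ace au choix des $\omega_\tau$ comme bases de $H^\tau({\cal X}_\Phi)$ sur $\O_L$.
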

\begin{proof}
Le (i) est imm\'ediat ainsi que
les deux premi\`eres propri\'et\'es du (ii).  La derni\`ere
est juste un exercice de traduction entre la d\'efinition de ${\rm ht}$ (cf.~\cite[th.\,0.3]{fdp})
et celle
de $\Omega_\infty^{\rm abs}$: les deux font intervenir une moyenne des
${\rm Per}_\infty(X_\Phi^g,\omega_\sigma^g,\omega_{c\sigma}^g)$ 
(not\'e $|\langle\omega_\sigma^g,\omega_{c\sigma}^g,u_g\rangle_\infty|_\infty$ dans~\cite{fdp}),
et les facteurs $v_p(\omega_\sigma^g)-v_p(\omega_{c\sigma}^g)$ qui apparaissent
dans la d\'efinition de ${\rm ht}$ de~\cite{fdp} sont $0$ dans cet article
gr\^ace \`a notre normalisation des $\omega_\tau$.
\end{proof}

\begin{rema}\label{CM6}
Yoshida~\cite{yoshida1} a \'enonc\'e une conjecture
donnant une formule pour des variantes de $\Omega^{\rm CM}_\infty(b_{K,\sigma,\tau})$
en termes des fonctions Gamma multiples de Shintani mentionn\'ees dans~\cite[\S\,0.7]{fdp}.
Notons que les variantes qu'il consid\`ere sont aussi d\'efinies ``\`a une unit\'e pr\`es''.
\end{rema}

\appendix
\section{}
La conj.\,0.4 de~\cite{fdp}, combin\'ee avec le (ii) du th.\,0.3 de~\cite{fdp}, 
fournit une formule conjecturale
pour la hauteur de Faltings d'une vari\'et\'e ab\'elienne \`a multiplication
complexe par l'anneau des entiers de
son corps de multiplication complexe. 
On peut se demander
``quelle portion'' de la conj.\,0.4 d\'ecoule de la conjecture sur les hauteurs de Faltings;
la prop.\,\ref{per3} ci-dessous
 montre que c'est ``la moiti\'e'' (par contre, la conjecture ``en moyenne'',
d\'emontr\'ee dans~\cite{AGHM,YZ},
ne fournit qu'une toute petite partie de cette conj.\,0.4).

On note $a\mapsto a^0$ la projection de $\Q\otimes\CM$ sur $\Q\otimes\CM^0$; 
elle envoie $\Q\otimes\CMm$ dans $\Q\otimes\CMm^0$.
Si $K$ est un corps CM et $\Phi\subset K$ est un type CM,
on pose\footnote{\label{per20} On a $B_{K,\Phi}(g)=2A_{K,\Phi}(g)-|\Phi|$, et la projection
$A_{K,\Phi}^0$ de $A_{K,\Phi}$
est la fonction reli\'ee \`a la hauteur de Faltings de $X_\Phi$ dans~\cite{fdp}.
En particulier, la conjecture pour les hauteurs de Faltings \'equivaut \`a
${\rm ht}(B_{K,\Phi}^0)=Z(B_{K,\Phi}^0,0)$ pour tous $K$ et $\Phi$.}
 $B_{K,\Phi}=\sum_{\sigma,\tau\in\Phi}b_{K,\sigma,\tau}$
o\`u $b_{K,\sigma,\tau}$ est d\'efinie par la formule~(\ref{truc5}).

\begin{prop}\label{per3}
Le sous-espace de $\Q\otimes\CMm^0$ engendr\'e par les $B_{K,\Phi}^0$ est l'espace
des fonctions paires de $\Q\otimes\CMm^0$
{\rm (i.e. $\phi(g^{-1})=\phi(g)$, pour tout $g\in G_\Q$)}.
\end{prop}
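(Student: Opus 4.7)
Plan. I first establish the inclusion ``span $\subseteq$ even part''. The key observation is that if we let $V_K=\mathbb{Q}[H_K]$, equipped with its standard $G_\mathbb{Q}$-equivariant symmetric bilinear pairing $\langle\,,\rangle$, and let $\phi_\Phi\in V_K$ be the function taking value $+1$ on $\Phi$ and $-1$ on $c\Phi$, then unfolding the definition~(\ref{truc5}) rewrites
$$B_{K,\Phi}(g)=\sum_{\sigma\in\Phi}\phi_\Phi(g\sigma)=\tfrac{1}{2}\langle\phi_\Phi,g^{-1}\phi_\Phi\rangle.$$
Because $\langle v,g^{-1}v\rangle=\langle gv,v\rangle=\langle v,gv\rangle$ (by $G_\mathbb{Q}$-invariance followed by symmetry), we deduce $B_{K,\Phi}(g^{-1})=B_{K,\Phi}(g)$, and central projection preserves this. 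Membership in $\CMm^0$ is automatic since each $b_{K,\sigma,\tau}\in\CMm$.

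For the reverse inclusion, I polarize. Since $c$ and $g$ commute in their actions on $H_K$ (as $K$ is CM), the $-1$-eigenspace $V_K^-=\{v\in V_K:cv=-v\}$, of dimension $d=[F:\mathbb{Q}]$, is $G_\mathbb{Q}$-stable, and the CM-type vectors $\{\phi_\Phi\}$ span $V_K^-\otimes\mathbb{Q}$ (any two differ by flipping the sign on one $c$-orbit). Extending bilinearly, define $B_K(v,w)(g)=\tfrac{1}{2}\cdot\bigl(\text{class projection of }\langle v,g^{-1}w\rangle\bigr)$. Polarization then identifies the $\mathbb{Q}$-span of the $B_{K,\Phi}^0$ with the span of the symmetrizations $B_K(v,w)+B_K(w,v)$ for $v,w\in V_K^-\otimes\mathbb{Q}$.

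Next, Schur orthogonality in the decomposition $V_K^-=\bigoplus_{\tau\text{ tot.\ odd}}W_\tau\otimes\tau$ (with $\dim W_\tau=\dim\tau^{G_K}$, by Frobenius reciprocity) gives
$$B_K(v,w)+B_K(w,v)=\sum_\tau\frac{\langle v_{\tau^*},w_\tau\rangle+\langle v_\tau,w_{\tau^*}\rangle}{2\dim\tau}\,\chi_\tau,$$
with coefficients invariant under $\tau\leftrightarrow\tau^*$, confirming evenness and giving a character-theoretic description of the span. Now for any totally odd irreducible Artin character $\tau$ factoring through $G_\mathbb{Q}^{\rm CM}$, the fixed field $\bar{\mathbb Q}^{\ker\tau}$ is not totally real ($c\notin\ker\tau$), so its Galois closure is a CM field; any CM field $K$ containing it satisfies $G_K\subset\ker\tau$ and hence $\tau$ appears in $V_K^-$. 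The pairing $\langle\,,\rangle$ restricts non-degenerately to each totally odd isotypic piece (non-degeneracy on $V_K$ passes to isotypic components), so by choosing $v,w\in V_K^-\otimes\mathbb{Q}$ supported on the $\operatorname{Gal}(\bar\mathbb{Q}/\mathbb{Q})$-orbit of $\tau$ we realize a nonzero rational multiple of the Galois-orbit sum $\sum_{\tau'\in\text{orbit}}(\chi_{\tau'}+\chi_{\tau'^*})$; such sums form a $\mathbb{Q}$-basis of the even part of $\mathbb{Q}\otimes\CMm^0$.

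The main obstacle will be this last rational descent: individual isotypic components $v_\tau$ are not rational when $\mathbb{Q}(\chi_\tau)\neq\mathbb{Q}$, so one must work with the coarser decomposition of $V_K^-\otimes\mathbb{Q}$ indexed by Galois orbits of totally odd irreducibles, and verify that the trace pairings on Galois-orbit-isotypic subspaces are rational and non-degenerate, so that suitable $v,w$ produce the desired rational Galois-averaged combinations.
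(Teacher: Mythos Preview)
Your inclusion ``span $\subseteq$ even'' is fine and is essentially the paper's argument, just rephrased via the bilinear form $\langle\,,\,\rangle$.

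The reverse inclusion has a real gap at the word ``polarization''. The identity $Q(v+w)-Q(v-w)=2(B_K(v,w)+B_K(w,v))$ only tells you that the span of $\{Q(v):v\in V_K^-\}$ equals the span of symmetrizations. It does \emph{not} tell you that the span of $\{Q(\phi_\Phi):\Phi\text{ a CM type}\}$ already achieves this, because the $\phi_\Phi$ are a finite (hypercube) set, not closed under addition. For a generic spanning set $v_1,\dots,v_n$ of a $d$-dimensional space the quadratic values $Q(v_i)$ span something of dimension $\le n$, typically much smaller than $\binom{d+1}{2}$. What makes it work here is precisely the hypercube structure: writing $\Phi=\Psi\sqcup\{\alpha,\beta\}$ and flipping the signs at $\alpha,\beta$ independently gives four CM types $\Phi_{ij}$, and the second-difference
\[
B_{K,\Phi_{00}}-B_{K,\Phi_{10}}-B_{K,\Phi_{01}}+B_{K,\Phi_{11}}=4\bigl(b_{K,\alpha,\beta}+b_{K,\beta,\alpha}\bigr)
\]
extracts the off-diagonal symmetrization. (One still has to get the diagonal $b_{K,\sigma,\sigma}$; this comes for free from $B_{K,\{\sigma\}}$ when $K$ is imaginary quadratic, and propagates to arbitrary $K$ by passing to $K'=K\cdot K_0$ with $K_0$ imaginary quadratic.) This explicit identity \emph{is} the proof in the paper, and it is exactly what your ``polarization'' sentence is hiding.

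Once that step is in place, your Schur-orthogonality and Galois-descent machinery is unnecessary. The symmetrized $B_K(v,w)+B_K(w,v)$, specialized to basis vectors $u_\sigma=e_\sigma-e_{c\sigma}$, are precisely the $b_{K,\sigma,\tau}+b_{K,\tau,\sigma}$; since the $b_{K,\sigma,\tau}$ generate $\Q\otimes\CMm$, their symmetrizations generate the even part of $\Q\otimes\CMm$, and projecting to $\CMm^0$ finishes. Your acknowledged ``main obstacle'' (rational descent over Galois orbits of characters) therefore never needs to be faced.
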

\begin{proof}
On a $b_{K,\sigma,\tau}(g^{-1})=b_{K,\tau,\sigma}(g)$; on en d\'eduit que
$B_{K,\Phi}$
est paire.
Comme sym\'etriser une fonction paire donne une fonction paire, $B_{K,\Phi}^0$ est aussi paire.
Pour la m\^eme raison, il suffit de v\'erifier que
le sous-espace de $\Q\otimes\CMm$ engendr\'e par les $B_{K,\Phi}$ est l'espace
des fonctions paires de $\Q\otimes\CMm$.

Maintenant, les $b_{K,\sigma,\tau}$ engendrant $\Q\otimes\CMm$, 
les $b_{K,\sigma,\tau}+b_{K,\tau,\sigma}$ forment une famille g\'en\'eratrice
de l'espace des fonctions paires de $\CMm$.

Si on \'ecrit $\Phi=\Psi\sqcup\{\alpha,\beta\}$, on fabrique des types CM
$$\Phi_{00}=\Phi,\quad \Phi_{01}=\Psi\sqcup\{\alpha,c\beta\},\quad
\Phi_{10}=\Psi\sqcup\{c\alpha,\beta\},\quad \Phi_{11}=\Psi\sqcup\{c\alpha,c\beta\}$$
Alors
$$B_{K,\Phi_{00}}-B_{K,\Phi_{10}}-B_{K,\Phi_{01}}+B_{K,\Phi_{11}}
=4(b_{K,\alpha,\beta}+b_{K,\beta,\alpha})$$
On en d\'eduit que l'espace engendr\'e par les $B_{K,\Phi}$ contient
une famille g\'en\'eratrice de l'espace des fonctions paires de $\Q\otimes\CMm$, ce qui permet
de conclure.
\end{proof}
\begin{rema}\label{per4}
Il est facile d'\'ecrire une fonction paire comme combinaison lin\'eaire
de $1$ et des $b_{K,\sigma,\tau}+b_{K,\tau,\sigma}$; 
la preuve de la proposition fournit une formule permettant de transformer
une telle expression en une
combinaison lin\'eaire des $B_{K,\Phi}$ (ou des $A_{K,\Phi}$, cf.~note~\ref{per20}). 
 Projeter sur $\CM^0$ fournit
une expression d'une fonction centrale paire comme combinaison
lin\'eaire des $A_{K,\Phi}^0$.
\end{rema}

\end{document}